\documentclass{aims}

\usepackage{amsmath}

  \usepackage{paralist}
  \usepackage{graphics} 
  \usepackage{epsfig} 
\usepackage{graphicx}  \usepackage{epstopdf}
 \usepackage[colorlinks=true]{hyperref}
\hypersetup{urlcolor=blue, citecolor=red}

  \textheight=8.2 true in
   \textwidth=5.0 true in
    \topmargin 30pt
     \setcounter{page}{1}



\newtheorem{theorem}{Theorem}[section]

\newtheorem{main}{Main Theorem}   
\newtheorem{lemma}[theorem]{Lemma}

\newtheorem*{problem}{Problem}
\theoremstyle{definition}
\newtheorem{definition}[theorem]{Definition}
\newtheorem{assumption}[theorem]{Assumption} 
\newtheorem{remark}{Remark}
\newtheorem{example}[theorem]{Example}  

\def\clo#1{\overline{#1}}
\def\text#1{\mbox{#1}}

\newcommand{\ra}{\rangle}
\newcommand{\la}{\langle}
\newcommand{\Sop}{\mathcal{S}}
\newcommand{\Cop}{\mathcal{C}}
\newcommand{\Pop}{\mathcal{P}}
\newcommand{\Aop}{\mathcal{A}}
\newcommand{\Top}{\partial_{t}^{-1}}
\newcommand{\Hsp}{\mathcal{H}}

\usepackage{amssymb}

\title[A control approach to recover the wave speed]
      {A control approach to recover the wave speed (conformal factor) from one measurement}

\author[Sebastian Acosta]{}

\subjclass{Primary: 35L05, 35R30 ; Secondary: 35Q93.}

\keywords{wave equation, acoustics, geometric inverse problem, imaging, control theory, single measurement, anisotropic media}

 \email{sacosta@bcm.edu}

\thanks{The author was partially supported by AFOSR Grant
FA9550-12-1-0117 and ONR Grant N00014-12-1-0256}

\begin{document}
\maketitle

\centerline{\scshape Sebastian Acosta}

\medskip

{\footnotesize
 \centerline{Department of Pediatric Cardiology}
   \centerline{Baylor College of Medicine, Houston, TX, USA}
}

\bigskip

 \centerline{(Communicated by the associate editor name)}

\begin{abstract}
In this paper we consider the problem of recovering the conformal factor in a conformal class of Riemannian metrics from the boundary measurement of one wave field. More precisely, using boundary control operators, we derive an explicit equation satisfied by the contrast between two conformal factors (or wave speeds). This equation is Fredholm and generically invertible provided that the domain of interest is properly illuminated at an initial time. We also show locally Lipschitz stability estimates.
\end{abstract}

\section{Introduction} \label{Section:Intro}
In some applications, such as imaging of soft biological tissues, the propagation of pressure waves can be modeled by the acoustic wave equation. Then one seeks to reconstruct the wave speed within a region of interest. However, in some cases, it is important to account for the anisotropic properties of media (muscles and bones). Although these are more appropriately modeled by elastodynamic systems, the anisotropic scalar wave equation may serve as a first step to incorporate such behavior. This equation can be expressed in geometric terms as follows,
\begin{align}
\partial_{t}^{2}w = \mathcal{A}_{c^{-2} g} w, \label{Eqn.2}
\end{align}
where $\mathcal{A}_{g}$ is the $\mu$-weighted Laplace-Beltrami operator defined as
\begin{align}
\mathcal{A}_{g} w = \mu^{-1} \text{div}_{g} \, ( \mu \, \text{grad}_{g} w) . \label{Eqn.3}
\end{align}
Here $\mu(x)$ and $c(x)$ are sufficiently smooth and positive, and $\text{div}_{g}$ and $\text{grad}_{g}$ denote the divergence and the gradient with respect to a smooth Riemannian metric $g$. In this paper, we assume full knowledge of the coefficient $\mu$ and the metric $g$.

The inverse problem we consider is to recover the conformal factor in the conformal class of metrics represented by $g$, that is, we seek to recover the wave speed $c(x)$ from knowledge of boundary data of $w$ on a sufficiently large time interval. Abusing terminology, we shall refer to $c(x)$ as the \textit{wave speed} even though the actual wave speed could be anisotropic and would be fully characterized by both $c$ and $g$.

In the literature, most works concerned with the recovery of coefficients in the wave equation from boundary measurements are limited to one of the following cases:
\begin{itemize}
\item[(i)] Knowledge of the hyperbolic Dirichlet-to-Neumann map. Thus, it is usually assumed that infinitely many illuminations and corresponding measurements are known. Here we find approaches based on special solutions and integral geometry (see for instance \cite{Rak-Sym-1988,Sun-1990,Syl-Uhl-1991,Ste-Uhl-1998,Uhl-2000,Ste-Uhl-2005,Bel-Dos-2011,Liu-Oks-2014} and  \cite[Section 8.3]{Isakov-1998}), and the boundary control method (see \cite{Beli-1997,Beli-2007} and \cite[Section 8.4]{Isakov-1998}).  

\item[(ii)] Knowledge of initial data and the corresponding boundary measurement. These are results in the general form of Theorem 8.2.2 in \cite{Isakov-1998} originated from \cite{Buk-Kli-1981}. See also \cite{Kli-1992,Puel-Yam-1996,Puel-Yam-1997,Yam-1999,Ima-Yam-2001,Ima-Yam-2003,Kli-Yam-2006,Bel-Yam-2006,Bel-Yam-2008,Ste-Uhl-2013,Liu-Tri-2011,Liu-Tri-2012,Liu-Tri-2013,Liu-2013} and references therein.
\end{itemize}

This paper falls into the second category. We summarize the novelty and contributions of our work in the following points:
\begin{itemize}
\item[(a)] The wave speed $c(x)$ appears within the principal part of the hyperbolic equation which leads to a challenging interplay between $c(x)$ and its derivatives. So for instance, our approach can be employed to determine a scalar coefficient $p(x)$ in $\nabla \cdot p \nabla$ treated in \cite{Ima-Yam-2003,Kli-Yam-2006}. Our work can be seen as a generalization of \cite{Ima-Yam-2003,Kli-Yam-2006} and other cited references to non-Euclidean geometries.
\item[(b)] Using control operators, we derive an \textit{explicit} equation satisfied by the contrast between two wave speeds. This equation has Fredholm form provided that the domain of interest is properly illuminated at an initial time. See the main results in Section \ref{Section:Inverse} for the precise statements. In dimension $n=2$, this Fredholm equation has a principal part of order zero (see Theorem \ref{Thm.MainInv}). In dimension $n \geq 3$, we can derive an analogous equation using finitely many illuminations (see Theorem \ref{Thm.MainInv2}). Otherwise, with a single illumination, we obtain a first-order PDE. We give conditions on the initial illumination for this first-order equation to be Fredholm (see Theorem \ref{Thm.MainInv3}). 
\item[(c)] The main restriction is that the initial profile $w|_{t=0}$ is assumed to be known a-priori. However, in contrast to most previous publications, we make no assumptions about the initial \textit{velocity} profile $\partial_{t} w |_{t=0}$.
\item[(d)] If well-known geometrical conditions are satisfied, this control approach is naturally suited for measurements on a sub-boundary $\Gamma \subset \partial \Omega$. See Assumption \ref{Assump.001} below.
\end{itemize}

We wish to acknowledge that our work is inspired by a combination of ideas developed in Puel--Yamamoto \cite{Puel-Yam-1997}, Liu--Oksanen \cite{Liu-Oks-2014} and Stefanov--Uhlmann \cite{Ste-Uhl-2013}. A brief review of control theoretical tools is presented in Section \ref{Section:Control}. Our main results are stated in Section \ref{Section:Inverse} and the proofs are provided in Section \ref{Section:Proof}.

\section{Background on Control Theory} \label{Section:Control}

Our approach relies heavily on exact boundary controllability for the wave equation. Hence, the purpose of this section is to review some facts and define the notation concerning control theoretical tools. Our guiding references are \cite{Glow-Lions-He-2008,Bar-Leb-Rau-1992,GLLT-2004,Lions-1988}. The first item is to ensure that the media under consideration allows for exact controllability which is closely related to geometric notions \cite{Glow-Lions-He-2008,Bar-Leb-Rau-1992,GLLT-2004}. We consider the wave equation (\ref{Eqn.2}) defined in a sufficiently smooth simply connected domain $\Omega \subset \mathbb{R}^n$ with boundary $\partial \Omega$ where $n \geq 2$, and the geodesic flow associated with this wave equation is determined by the metric $c^{-2} g$ because,
\begin{align*}
& \mathcal{A}_{c^{-2}g} w = \Delta_{c^{-2} g} w \, + \, \text{lower order terms},
\end{align*}
where $\Delta_{c^{-2} g}$ is the Laplace-Beltrami operator corresponding to the metric $c^{-2}g$.

Following Bardos, Lebeau and Rauch \cite{Bar-Leb-Rau-1992}, we assume that our problem enjoys the \textit{geometrical control condition} (GCC) for the Riemannian manifold $(\Omega, c^{-2} g)$ with only a portion $\Gamma$ of the boundary $\partial \Omega$ being accessible for control and observation. We assume that $\Gamma$ is a smooth and simply connected domain relative to $\partial \Omega$. In this paper, we assume that the following condition is satisfied.

\begin{assumption}[GCC] \label{Assump.001}
The geodesic flow of $(\Omega, c^{-2} g)$ reaches the accessible part of the boundary. In other words, there exists $\tau < \infty$ such that any unit-speed ray, originating from any point in $\Omega$ at $t=0$, reaches $\Gamma$ in a nondiffractive manner (after possible geometrical reflections on $\partial \Omega \setminus \Gamma$) before time $t=\tau$.
\end{assumption}

To fix our ideas, we will pose the control problem with a specific level of regularity. However, we should keep in mind that this problem can also be formulated on any scale of Sobolev regularity \cite{Bar-Leb-Rau-1992}. In addition, we should interpret the Hilbert space $H^{0}(\Omega)$ with the inner-product appropriately weighted by $\mu \det(c^{-2} g)^{1/2}$ so that $\mathcal{A}_{c^{-2} g}$ is formally self-adjoint with respect to the duality pairing of $H^{0}(\Omega)$. The same weight is incorporated in the inner product for $H^{0}(\partial \Omega)$.

Now we consider the following auxiliary problem. Given $\zeta \in H^{1}_{0}((0,\tau) \times \Gamma)$, find the generalized solution $\xi \in C^{k}([0,\tau];H^{1-k}(\Omega))$ for $k=0,1$ of the following time-reversed problem
\begin{subequations}
\label{Eqn:c}
\begin{align}
& \partial_{t}^{2} \xi - \mathcal{A}_{c^{-2} g} \xi = 0 \qquad && \mbox{in} \quad (0,\tau) \times \Omega \label{Eqn:001c}  \\
& \xi = 0 \quad \mbox{and} \quad \partial_{t} \xi = 0 \qquad && \mbox{on} \quad \{ t = \tau \} \times \Omega \label{Eqn:002c} \\
& \xi = \zeta \qquad && \mbox{on} \quad (0,\tau) \times \Gamma \label{Eqn:003c} \\
& \xi = 0 \qquad && \mbox{on} \quad (0,\tau) \times (\partial \Omega \setminus \Gamma) \label{Eqn:004c} 
\end{align}
\end{subequations}
This is a well-posed problem satisfying a stability estimate of the following form,
\begin{align*}
\| \xi \|_{C^{k}([0,\tau];H^{1-k}(\Omega))} + \| \nabla_{c^{-2} g} \xi \|_{C^{k}([0,\tau];H^{-k}(\Omega))} \lesssim \| \zeta \|_{H^{1}_{0}((0,\tau) \times \Gamma)}. 
\end{align*}
for $k=0,1$. Here and in the rest of the paper, the symbol $\lesssim$ means inequality up to a positive constant.

Given arbitrary $\phi \in H^{0}(\Omega)$, the goal of the control problem is to find a Dirichlet boundary condition $\zeta \in H^{1}_{0}((0,\tau) \times \Gamma)$ to drive the solution $\xi$ of (\ref{Eqn:c}) from vanishing Cauchy data at $t=\tau$ to the desired Cauchy data $( \xi, \partial_{t} \xi) = (0,\phi)$ at time $t=0$. 

The well-posedness of this control problem under the \textit{geometrical control condition} was obtained in \cite{Bar-Leb-Rau-1992} for smooth data, and extended in \cite{Burq-1997} for less regular domains and coefficients. For our reference, we state it now as a theorem.

\begin{theorem}[Controllability] \label{Thm.Control}
Let the geometrical control condition \ref{Assump.001} hold. Then for any function $\phi \in H^{0}(\Omega)$, there exists Dirichlet boundary control $\zeta \in H^{1}_{0}((0,\tau) \times \Gamma)$ so that the solution $\xi$ of (\ref{Eqn:c}) satisfies
\begin{align*}
(\xi , \partial_{t} \xi) = (0,\phi) \qquad \text{at time $t = 0$.}
\end{align*}

Among all such boundary controls there exists $\zeta_{\rm min}$ which is uniquely determined by $\phi$ as the minimum norm control and satisfies the following stability condition
\begin{align*}
& \| \zeta_{\rm min} \|_{H^{1}_{0}((0,\tau) \times \Gamma)} \leq C \| \phi \|_{H^{0}(\Omega)}
\end{align*}
for some positive constant $C = C(\Omega,\Gamma,c,g,\tau)$.
\end{theorem}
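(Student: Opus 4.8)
The plan is to prove this by Lions' Hilbert Uniqueness Method (HUM) \cite{Lions-1988}, with the required observability estimate supplied by the geometrical control condition of Assumption \ref{Assump.001}. I would first observe that the statement lies one notch above the classical $L^{2}$-control setting: differentiating \eqref{Eqn:c} in time turns a control $\zeta \in H^{1}_{0}((0,\tau)\times\Gamma)$ into the $L^{2}((0,\tau)\times\Gamma)$ control $\partial_{t}\zeta$ for the same equation, driving $\partial_{t}\xi$ from zero Cauchy data at $t=\tau$ to $(\phi,0)$ at $t=0$, and conversely $\Top$ recovers $\zeta$ from an $L^{2}$ control once the compatibility at the end times $t\in\{0,\tau\}$ is respected --- which is exactly why the control space is $H^{1}_{0}$ rather than merely $H^{1}$. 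It therefore suffices to run HUM at the appropriate regularity level. I would set up the homogeneous adjoint problem: for Cauchy data $(v_{0},v_{1})$ in a suitable Hilbert space $\mathcal{H}$, let $v$ solve
\begin{align*}
& \partial_{t}^{2} v - \mathcal{A}_{c^{-2}g} v = 0 \quad \text{in } (0,\tau)\times\Omega, \\
& v = 0 \quad \text{on } (0,\tau)\times\partial\Omega, \qquad (v,\partial_{t}v)|_{t=0} = (v_{0},v_{1}),
\end{align*}
and invoke the hidden (boundary) regularity of the wave equation, which makes the conormal derivative $\partial_{\nu}v$ with respect to $c^{-2}g$ well defined on $(0,\tau)\times\Gamma$ with a trace estimate $\|\partial_{\nu}v\| \lesssim \|(v_{0},v_{1})\|_{\mathcal{H}}$.

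Next I would introduce the HUM operator $\Lambda$, sending $(v_{0},v_{1})$ to the Cauchy data at $t=0$ of the solution of the time-reversed problem \eqref{Eqn:c} driven by the Dirichlet datum $\zeta = \partial_{\nu}v$ on $(0,\tau)\times\Gamma$ and by $0$ on $(0,\tau)\times(\partial\Omega\setminus\Gamma)$. Multiplying the controlled equation by $v$ and integrating by parts in space-time yields the HUM identity $\langle \Lambda(v_{0},v_{1}),(v_{0},v_{1})\rangle = \int_{0}^{\tau}\!\int_{\Gamma}|\partial_{\nu}v|^{2}\,dS\,dt$, so $\Lambda$ is bounded, symmetric and nonnegative. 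The decisive step is then the observability inequality
\begin{align*}
\|(v_{0},v_{1})\|_{\mathcal{H}}^{2} \;\le\; C \int_{0}^{\tau}\!\!\int_{\Gamma}|\partial_{\nu}v|^{2}\,dS\,dt ,
\end{align*}
valid for all solutions of the adjoint problem: this is precisely the theorem of Bardos, Lebeau and Rauch \cite{Bar-Leb-Rau-1992} --- under Assumption \ref{Assump.001} every unit-speed geodesic of $(\Omega,c^{-2}g)$ meets $\Gamma$ nondiffractively before time $\tau$, so no solution can remain unobserved --- together with Burq's refinement \cite{Burq-1997} accommodating the regularity assumed here for $\Omega$, $\Gamma$, $\mu$ and $g$. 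Granting this inequality, $\Lambda$ is coercive, hence an isomorphism $\mathcal{H}\to\mathcal{H}'$ by Lax--Milgram. Since $\phi\in H^{0}(\Omega)$, the pair $(0,\phi)$ belongs to $\mathcal{H}'$; solving $\Lambda(v_{0},v_{1})=(0,\phi)$ and setting $\zeta = \partial_{\nu}v|_{(0,\tau)\times\Gamma}$ (extended by zero) produces a control whose associated solution $\xi$ of \eqref{Eqn:c} satisfies $(\xi,\partial_{t}\xi)|_{t=0}=(0,\phi)$.

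To identify $\zeta_{\rm min}$ I would use the variational characterization built into HUM: among all controls driving \eqref{Eqn:c} to $(0,\phi)$, the one of minimal $H^{1}_{0}((0,\tau)\times\Gamma)$ norm minimizes $\tfrac12\|\zeta\|^{2}$ over that affine set, and its optimality system is exactly the coupled pair (adjoint problem, time-reversed problem) produced above; strict convexity gives uniqueness of the minimizer, and injectivity of $\Lambda$ gives uniqueness of $(v_{0},v_{1})$. The stability estimate then follows from
\begin{align*}
\|\zeta_{\rm min}\|^{2} = \langle \Lambda(v_{0},v_{1}),(v_{0},v_{1})\rangle = \langle (0,\phi),(v_{0},v_{1})\rangle \le \|\phi\|_{H^{0}(\Omega)}\,\|(v_{0},v_{1})\|_{\mathcal{H}}
\end{align*}
combined with $\|(v_{0},v_{1})\|_{\mathcal{H}} = \|\Lambda^{-1}(0,\phi)\|_{\mathcal{H}} \lesssim \|\phi\|_{H^{0}(\Omega)}$, so that $C=C(\Omega,\Gamma,c,g,\tau)$ is controlled entirely by the observability and trace constants.

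I expect the genuinely hard ingredient to be the observability inequality itself: its proof is the propagation-of-singularities / microlocal defect measure analysis of Bardos--Lebeau--Rauch under the geometric control condition, including the delicate treatment of grazing and reflected rays on $\partial\Omega\setminus\Gamma$, which I would quote rather than reproduce. The remaining points are routine functional-analytic bookkeeping: pinning down the space $\mathcal{H}$ (one order smoother than the energy space, so that the hidden-regularity trace lands in $H^{1}_{0}$ and its dual contains the target $(0,\phi)$ with $\phi\in H^{0}(\Omega)$) and keeping track of the compatibility conditions at $t\in\{0,\tau\}$ that force the control space to be $H^{1}_{0}$ --- all of which is standard once the reduction via $\Top$ to the classical $L^{2}$ level is in hand.
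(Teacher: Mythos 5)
Your proposal is correct in outline and is the canonical proof. Note, however, that the paper does not prove Theorem \ref{Thm.Control} at all: it states it as a known result, citing \cite{Bar-Leb-Rau-1992} for controllability under the geometric control condition and \cite{Burq-1997} for less regular domains and coefficients, and those references argue exactly as you do --- HUM plus the Bardos--Lebeau--Rauch observability inequality, with your reduction via $\Top$ (trading an $H^{1}_{0}$ Dirichlet control for an $L^{2}$ control of the time-differentiated state) being precisely the device the paper itself exploits immediately afterwards in Definition \ref{Def:CSOpe} and Lemma \ref{Lemma.007}.
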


Notice that we drive the wave field from vanishing Cauchy data at time $t=\tau$ to Cauchy data at time $t=0$ of the form $(\xi,\partial_{t}\xi) = (0,\phi)$. Now we define a particular way to drive the solution to a Cauchy data at time $t=0$ of the form $(\xi,\partial_{t}\xi) = (\phi,0)$.

\begin{definition}[Control Operators] \label{Def:CSOpe}
Let the \textit{control} operator
\begin{align*}
\Cop : H^{0}(\Omega) \to H^{0}((0,\tau) \times \Gamma), 
\end{align*}
be given by the map $\phi \mapsto \partial_{t} \zeta_{\rm min}$ where $\zeta_{\rm min}$ is defined in Theorem \ref{Thm.Control}. We also define the corresponding \textit{solution} operator
\begin{align*}
\Sop : H^{0}(\Omega) \to H^{0}((0,\tau) \times \Omega), 
\end{align*}
mapping $\phi \mapsto \partial_{t} \xi$ where $\xi$ is the solution of (\ref{Eqn:c}) with $\zeta = \zeta_{\rm min}$.
\end{definition}

The \textit{control} and \textit{solution} operators given by Definition \ref{Def:CSOpe} are purposely defined as the time derivative of more regular functions so that they enjoy a gain of space regularity with integration in time. Denote by $\Top$ the following operation
\begin{align*}
(\Top v)(t) = \int_{0}^{t} v(s) ds.  
\end{align*}
Using the above notation, we see that $\Top \partial_{t} \xi = \xi$ because $\xi|_{t=0} = 0$, and $\Top \partial_{t} \zeta = \zeta$ because $\zeta |_{t=0} = 0$ since $\zeta \in H^{1}_{0}((0,\tau) \times \Gamma)$. This leads to the following properties for the \textit{control} and \textit{solution} operators in the form of a lemma, whose proof follows directly from the conclusions in Theorem \ref{Thm.Control} and well-known results \cite{Lio-Mag-Book-1972,Lasiecka-1986}.

\begin{lemma} \label{Lemma.007}
The \textit{control} and \textit{solution} operators from Definition \ref{Def:CSOpe} are bounded. 

Moreover, for given $\phi \in H^{0}(\Omega)$ we have that Dirichet control $\eta = \Cop \phi$ drives the generalized solution $\psi = \Sop \phi$ of (\ref{Eqn:c}) from vanishing Cauchy data at time $t=\tau$ to Cauchy data $(\psi,\partial_{t} \psi) = (\phi,0)$ at time $t=0$. Also, the wave field $\psi = \Sop \phi$ satisfies the following estimate,
\begin{align}
\| \psi \|_{C^{k}([0,\tau];H^{-k}(\Omega))} + \| \nabla_{c^{-2} g} \psi \|_{C([0,\tau];H^{-k-1}(\Omega))} \lesssim \| \eta \|_{H^{0}((0,\tau) \times \Gamma)},  \label{Eqn:Stab02}
\end{align}
for $k=0,1$, and 
\begin{align}
\| \Top \psi \|_{C^{k}([0,\tau];H^{1-k}(\Omega))} + \| \nabla_{c^{-2} g} \Top \psi \|_{C^{k}([0,\tau];H^{-k}(\Omega))} \lesssim \| \Top \eta \|_{H^{1}_{0}((0,\tau) \times \Gamma)}  \label{Eqn:Stab03}
\end{align}
for $k=0,1$.
\end{lemma}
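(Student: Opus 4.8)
The plan is to derive every assertion from three facts already available: Theorem \ref{Thm.Control}, the well-posedness estimate stated for the auxiliary problem (\ref{Eqn:c}), and the classical transposition (``hidden regularity'') theory for the wave equation recalled in \cite{Lio-Mag-Book-1972,Lasiecka-1986}. Boundedness of $\Cop$ is immediate: since $\zeta_{\rm min}|_{t=0}=0$ one has $\Top\partial_{t}\zeta_{\rm min}=\zeta_{\rm min}$, hence $\|\Cop\phi\|_{H^{0}((0,\tau)\times\Gamma)} = \|\partial_{t}\zeta_{\rm min}\|_{H^{0}((0,\tau)\times\Gamma)} \lesssim \|\zeta_{\rm min}\|_{H^{1}_{0}((0,\tau)\times\Gamma)} \le C\|\phi\|_{H^{0}(\Omega)}$ by Theorem \ref{Thm.Control}; boundedness of $\Sop$ will then follow from (\ref{Eqn:Stab02}) with $k=0$ together with this bound.

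Next I would identify the problem solved by $\psi=\Sop\phi=\partial_{t}\xi$, where $\xi$ is the solution of (\ref{Eqn:c}) with $\zeta=\zeta_{\rm min}$ and $\eta=\partial_{t}\zeta_{\rm min}$. Differentiating (\ref{Eqn:001c})--(\ref{Eqn:004c}) in time shows that $\psi$ satisfies $\partial_{t}^{2}\psi-\Aop_{c^{-2}g}\psi=0$ in $(0,\tau)\times\Omega$ with $\psi=\eta$ on $(0,\tau)\times\Gamma$ and $\psi=0$ on $(0,\tau)\times(\partial\Omega\setminus\Gamma)$. For the Cauchy data, (\ref{Eqn:002c}) gives $\psi|_{t=\tau}=\partial_{t}\xi|_{t=\tau}=0$, and since $\xi|_{t=\tau}=0$ in $H^{1}(\Omega)$ the equation forces $\partial_{t}\psi|_{t=\tau}=\Aop_{c^{-2}g}\xi|_{t=\tau}=0$ in $H^{-1}(\Omega)$; likewise the controllability identity $(\xi,\partial_{t}\xi)|_{t=0}=(0,\phi)$ together with $\xi|_{t=0}=0$ gives $(\psi,\partial_{t}\psi)|_{t=0}=(\phi,0)$. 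This is exactly the middle assertion of the lemma.

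The estimate (\ref{Eqn:Stab03}) then needs no new argument: because $\xi|_{t=0}=0$ and $\zeta_{\rm min}|_{t=0}=0$ we have $\Top\psi=\Top\partial_{t}\xi=\xi$ and $\Top\eta=\Top\partial_{t}\zeta_{\rm min}=\zeta_{\rm min}$, so (\ref{Eqn:Stab03}) is precisely the well-posedness estimate recalled just after (\ref{Eqn:c}), applied with $\zeta=\zeta_{\rm min}$. For (\ref{Eqn:Stab02}) I would appeal to the transposition theory: by the second step $\psi$ is the unique transposition solution of the wave equation with square-integrable Dirichlet datum $\eta\in H^{0}((0,\tau)\times\Gamma)$ (taken to be zero on $\partial\Omega\setminus\Gamma$) and vanishing Cauchy data at $t=\tau$, so the standard regularity result --- proved by duality against the adjoint problem using the hidden regularity of the conormal derivative, as in \cite{Lio-Mag-Book-1972,Lasiecka-1986} and in the variable-coefficient setting of \cite{Bar-Leb-Rau-1992,Burq-1997} --- gives $\psi\in C([0,\tau];H^{0}(\Omega))\cap C^{1}([0,\tau];H^{-1}(\Omega))$ together with the bound (\ref{Eqn:Stab02}) in terms of $\|\eta\|_{H^{0}((0,\tau)\times\Gamma)}$; the conclusion for $\nabla_{c^{-2} g}\psi$ is then immediate from $\psi\in C^{k}([0,\tau];H^{-k}(\Omega))$.

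The one genuinely non-formal point, and the step I expect to be the main obstacle, is (\ref{Eqn:Stab02}): the gain of one degree of spatial regularity obtained upon integrating the boundary datum once in time --- equivalently, the statement that a merely $L^{2}$-in-space-and-time Dirichlet datum produces a solution whose energy is shifted down by exactly one Sobolev level --- is the hidden-regularity/transposition estimate, and its validity in the present Riemannian, partially reflecting geometry must be imported from the cited literature, relying on the smoothness hypotheses on $\Omega$, $\Gamma$, $\mu$, $g$ and $c$. Once (\ref{Eqn:Stab02}) is granted, the time differentiation in the second step is rigorous at the stated regularity levels and everything else is bookkeeping.
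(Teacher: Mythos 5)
Your proposal is correct and follows essentially the same route as the paper, which gives no detailed argument but simply asserts that the lemma ``follows directly from the conclusions in Theorem \ref{Thm.Control} and well-known results \cite{Lio-Mag-Book-1972,Lasiecka-1986}''; you have merely made explicit the bookkeeping ($\Top\psi=\xi$, $\Top\eta=\zeta_{\rm min}$, the time-differentiated problem for $\psi$) and correctly identified (\ref{Eqn:Stab02}) as the one substantive ingredient, namely the transposition/hidden-regularity estimate imported from the cited literature.
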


\section{Inverse Problem and Recovery Equation} \label{Section:Inverse}

In this section we state the inverse problem for the recovery of the wave speed, and we also present our main results. We start by defining the forward problem. Let $w$ be the strong solution of the following problem,
\begin{subequations}
\label{Eqn:Forward}
\begin{align}
& \partial_{t}^{2} w - \mathcal{A}_{c^{-2} g} w = 0 \qquad  && \text{in} \quad (0 ,\tau) \times \Omega,   \\
& w = \alpha \quad \text{and} \quad \partial_{t} w = \beta && \text{on} \quad \{ t = 0 \} \times \Omega,  \\
& w = \gamma \qquad  && \text{on} \quad (0 ,\tau) \times \Gamma, \\
& w = 0 \qquad  && \text{on} \quad (0 ,\tau) \times (\partial \Omega \setminus \Gamma). 
\end{align}
\end{subequations}
We also define $\tilde{w}$ to solve a wave problem with potentially different speed $\tilde{c}$, and initial Cauchy data $(\tilde{w},\partial_{t}\tilde{w})|_{t=0}=(\alpha, \tilde{\beta})$ and the same Dirichlet boundary values $\tilde{w} = \gamma$ on $(0 ,\tau) \times \Gamma$. Notice that $\beta$ and $\tilde{\beta}$ do \textit{not} have to coincide. We define the Neumann traces $\lambda$ and $\tilde{\lambda}$ of $w$ and $\tilde{w}$, respectively, as follows
\begin{align*}
\lambda  = \nu \cdot \text{grad}_{c^{-2} g} w \quad \text{and} \quad  \tilde{\lambda}  = \nu \cdot \text{grad}_{c^{-2} g} \tilde{w} \qquad \text{on $((0,\tau) \times \Gamma)$},
\end{align*}
where $\nu$ denotes the outward normal on $\partial \Omega$.

Now we realize that the governing operator in (\ref{Eqn.2}) can be expressed as follows,
\begin{align}
& \mathcal{A}_{c^{-2}g} w = c^{2} \mathcal{A}_{g} w + \frac{2-n}{2} \,  \nabla c^{2} \cdot \nabla_{g} w \label{Eqn.4}
\end{align}
where $\nabla$ is the Euclidean gradient. In dimension $n=2$, we have the convenient identity $\mathcal{A}_{c^{-2}g} w = c^{2} \mathcal{A}_{g} w$. However, we consider the general case $n \geq 2$. In view of the identity (\ref{Eqn.4}) and in order to obtain a linear inverse source problem, we define
\begin{subequations}
\begin{align}
& u = w - \tilde{w}, \qquad f = c^2 - \tilde{c}^2,  \label{Eqn:09} \\
& \sigma = \mathcal{A}_{g} \tilde{w}, \qquad \text{and} \qquad \Sigma = \frac{2-n}{2}  \, \nabla_{g} \tilde{w}. \label{Eqn:10}
\end{align}
\end{subequations}

Hence we have that $u$ is the strong solution of the following problem,
\begin{subequations}
\label{Eqn:f}
\begin{align}
& \partial_{t}^{2} u - \mathcal{A}_{c^{-2} g} u = \sigma f +  \Sigma \cdot \nabla f  \qquad && \mbox{in} \quad (0,\tau) \times \Omega  \label{Eqn:001f} \\
& u = 0 \quad \mbox{and} \quad \partial_{t} u = \beta - \tilde{\beta} \qquad && \mbox{on} \quad \{ t = 0 \} \times \Omega \label{Eqn:002f} \\
& u = 0 \qquad && \mbox{on} \quad (0,\tau) \times \partial \Omega. \label{Eqn:003f}
\end{align}
\end{subequations}
Recall that we measure the Neumann data 
\begin{align}
m  = \lambda - \tilde{\lambda} = \nu \cdot \text{grad}_{c^{-2} g} u, \qquad \text{on $((0,\tau) \times \Gamma)$} \label{Eqn.Measurement}
\end{align}
With this notation we can precisely state the inverse problem for the determination of $f$ which is the contrast between the two conformal factors.

\begin{problem}[Inverse Problem] \label{Def.InvProb}
For an appropriate initial illumination $\alpha$, show that if the Neumann traces $\lambda$ and $\tilde{\lambda}$ coincide, then $c=\tilde{c}$. Or by conversion into a linearized inverse source problem, given the Neumann boundary measurement $m$ in (\ref{Eqn.Measurement}) and the source factors $\sigma$ and $\Sigma$ in (\ref{Eqn:f}), determine the source term $f$. 
\end{problem}

In order to state our main results, we define the following time integral operator $\Pop : H^{0}((0,\tau) \times \Omega) \to H^{0}(\Omega)$ given by
\begin{align}
(\Pop v)(x) = \int_{0}^{\tau} v(t,x) \, dt. \label{Eqn.TimeInt}
\end{align}

Now notice that $\dot{u}$ solves a wave problem (in the standard weak sense) with forcing term $\dot{\sigma} f + \dot{\Sigma} \cdot \nabla f$ and initial Cauchy data 
\begin{align*}
(\dot{u},\partial_{t}\dot{u})|_{t=0} = (\beta - \tilde{\beta}, \sigma|_{t=0}f + \Sigma|_{t=0} \cdot \nabla f).
\end{align*}

In what follows, we will evaluate the duality pairing between the time derivative of the terms in equation (\ref{Eqn:001f}) against $\psi$, the solution of (\ref{Eqn:c}). Let $m$ be given by (\ref{Eqn.Measurement}) and consider,
\begin{align*}
\la \dot{\sigma} f + \dot{\Sigma} \cdot \nabla f , \psi \ra_{(0,\tau) \times \Omega} & = \la \partial_{t}^{2} \dot{u} - \mathcal{A}_{c^{-2}g} \dot{u} , \psi \ra_{(0,\tau) \times \Omega} \\
& = - \la \sigma_{0} f + \Sigma_{0} \cdot \nabla f, \phi \ra_{\Omega} - \la \dot{m} , \eta \ra_{(0,\tau) \times \Gamma }
\end{align*}
where $\psi = \Sop \phi$ and $\eta = \Cop \phi$, and $u$ possesses vanishing trace on $(0,\tau) \times \partial \Omega$. Here we have also used the fact that $\psi$ has zero Cauchy data at time $t=\tau$, and $\partial_{t}\psi = 0$ at time $t=0$. This last fact is very important since it eliminates the unknown term $ \dot{u}|_{t=0} = \beta - \tilde{\beta}$ from the above equation. Now, since $\phi \in H^{0}(\Omega)$ is arbitrary, then we obtain the following,
\begin{align}
\Big [\Sigma_{0} + ( \Pop \dot{\Sigma} \Sop ) \Big ]^{*} \nabla f + \Big [\sigma_{0} + ( \Pop \dot{\sigma} \Sop ) \Big ]^{*} f  & = - \Cop^{*} \dot{m}, \label{Eqn.Main03}
\end{align}
where $\sigma_{0} = \sigma |_{t=0} : H^{0}(\Omega) \to H^{0}(\Omega)$ is understood as a multiplicative operator mapping $\phi \mapsto \sigma_{0}\phi$, which makes it self-adjoint. Similarly, $\Sigma_{0} = \Sigma |_{t=0} : H^{0}(\Omega) \to H^{0}(\Omega)^{n}$ maps $\phi \mapsto \Sigma_{0} \phi$ and its adjoint $\Sigma_{0}^{*} : H^{0}(\Omega)^{n} \to H^{0}(\Omega)$ maps $\Phi \mapsto \Sigma_{0} \cdot \Phi$.

This last equation is suited for our purposes and it represents the main result of this paper along with statements of its solvability provided by the following theorems.

\begin{main} \label{Thm.MainInv}
Let the geometrical control condition \ref{Assump.001} hold. If 
\begin{itemize}
\item[(i)] there exists a constant $\delta > 0$ such that $|\sigma_{0}(x)| \geq \delta$ for a.a. $x \in \Omega$, and
\item[(ii)] $\sigma \in C^{1}([0,\tau] ; C(\clo{\Omega}))$,
\end{itemize}
then the operator 
\begin{align}
\Big [\sigma_{0} + ( \Pop \dot{\sigma} \Sop ) \Big ] : H^{0}(\Omega) \to H^{0}(\Omega)  \label{Eqn:MainOp01}
\end{align}
is Fredholm. Moreover, there exists an open and dense subset $\mathcal{U}$ of $(i) \cap (ii)$ such that for each $\sigma$ in this set, the operator (\ref{Eqn:MainOp01}) and its adjoint are boundedly invertible. 

Concerning the Inverse Problem \ref{Def.InvProb}, in dimension $n=2$ where $\Sigma \equiv 0$, for each $\sigma \in \mathcal{U}$, the inverse problem is uniquely solvable, and the following locally Lipschitz stability estimate,
\begin{align*}
\| c^2 - \tilde{c}^2 \|_{H^{0}(\Omega)} \leq C \, \| \lambda -  \tilde{\lambda} \|_{H^{1}([0,\tau]; H^{0}(\Gamma))},
\end{align*}
holds for a positive constant $C$ that remains uniformly bounded for $c$ and $\tilde{c}$ in small bounded sets of $C^{2}(\clo{\Omega})$.
\end{main}

We find appropriate to make the following remarks concerning global uniqueness in the recovery of the wave speed (conformal factor) from a single measurement.

\begin{remark} \label{Rmk:01}
In dimension $n=2$ we have that $\Sigma \equiv 0$. Thus, under the geometrical control condition \ref{Assump.001}, if the initial state $\alpha$ of the forward problem (\ref{Eqn:Forward}) satisfies $|\mathcal{A}_{g} \alpha(x)| \geq \delta > 0$, then for a sufficiently regular and generic term $\mathcal{A}_{g} \alpha$, the Cauchy data of $w$ on the sub-boundary $((0,\tau) \times \Gamma)$ uniquely identifies the wave speed $c(x)$. The positivity condition $|\mathcal{A}_{g}\alpha(x)| \geq \delta > 0$ indicates that the region of interest should be properly illuminated at the initial time $t=0$. Similar positivity conditions are also found in studies of related inverse problems. See \cite[Thm. 8.2.2]{Isakov-1998} and \cite{Kli-1992,Puel-Yam-1996,Puel-Yam-1997,Yam-1999,Ima-Yam-2001,Ima-Yam-2003,Bel-Yam-2006,Bel-Yam-2008,Ste-Uhl-2013,Liu-Tri-2011,Liu-Tri-2012,Liu-Tri-2013,Liu-2013} and references therein.
\end{remark}

\begin{remark} \label{Rmk:02}
Recall the definition of $\sigma$ in (\ref{Eqn:10}). The regularity condition $(ii)$ of Theorem \ref{Thm.MainInv} requires the initial Cauchy data $(\tilde{w},\partial_{t}\tilde{w})|_{t=0}=(\alpha, \tilde{\beta})$ and the Dirichlet boundary data $\gamma$ to be sufficiently regular. We note that if $\tilde{w} \in C^2([0,\tau] \times \clo{\Omega})$ (a classical solution of the wave equation) then this regularity requirement is satisfied.
\end{remark}

\begin{remark} \label{Rmk:03}
Also notice that, aside from regularity, Theorem \ref{Thm.MainInv} does not require any a-priori knowledge of the initial velocity $\beta$ in the forward problem (\ref{Eqn:Forward}).
\end{remark}

Now, for the case $n \geq 3$ we can obtain a similar result using more than one illumination. More precisely, consider the forward problem (\ref{Eqn:Forward}) with $n+1$ triples of input data $(\alpha_{i},\beta_{i},\gamma_{i})$ for $i=1,...,n+1$. We do not impose conditions on the initial velocities $\beta_{i}$ but we do assume knowledge of $\alpha_{i}$ and $\gamma_{i}$. Each triple $(\alpha_{i},\beta_{i},\gamma_{i})$ induces a corresponding wave field $w_{i}$ with wave speed $c$, and $\tilde{w}_{i}$ with wave speed $\tilde{c}$. As in (\ref{Eqn:09}), we define  
\begin{align}
& \sigma_{i} = \mathcal{A}_{g} \tilde{w}_{i}, \qquad \text{and} \qquad \Sigma_{i} = \frac{2-n}{2}  \, \nabla_{g} \tilde{w}_{i} \qquad \text{for $i=1,...,n+1$}. \label{Eqn:11}
\end{align}
We set up the following operator-valued matrices
\begin{eqnarray*}
A = \left[
\begin{array}{ccccc}
 \Sigma_{1,1} & \Sigma_{1,2} & \cdots & \Sigma_{1,n} & \sigma_{1} \\ 
 \Sigma_{2,1} & \Sigma_{2,2} & \cdots & \Sigma_{1,n} & \sigma_{2} \\ 
 \vdots & \vdots & \ddots & \vdots & \vdots \\ 
 \Sigma_{n+1,1} & \Sigma_{n+1,2} & \cdots & \Sigma_{n+1,n} & \sigma_{n+1}
\end{array} \right] \label{Eqn:12} \quad \text{and} \quad
K_{i,j} = (\Pop \dot{A}_{i,j} \Sop)^{*} 
\end{eqnarray*}
where $\Sigma_{i,j}$ is the $j^{\rm th}$ entry of $\Sigma_{i}$. Notice that the main equation (\ref{Eqn.Main03}) can be expressed as $\left[ M_{0} + K \right] F = M$ where $F = (\nabla f , f)$ and $M_{i} = - \Cop^{*} m_{i}$, $m_{i} = \nu \cdot \nabla_{c^{-2} g} (w_{i} - \tilde{w}_{i})$ and $M_{0} = M|_{t=0}$.

With this notation we state the second main result.

\begin{main} \label{Thm.MainInv2}
Let the geometrical control condition \ref{Assump.001} hold. If 
\begin{itemize}
\item[(i)] there exists a constant $\delta > 0$ such that $|\det{M_{0}}(x)| \geq \delta$ for a.a. $x \in \Omega$, and
\item[(ii)] $\sigma \in C^{1}([0,\tau]; C(\clo{\Omega}))$ and $\Sigma \in C^{1}([0,\tau]; C(\clo{\Omega}))^{n}$,
\end{itemize}
then the operator 
\begin{align}
\left[ M_{0} + K \right] : H^{0}(\Omega)^{n+1} \to H^{0}(\Omega)^{n+1}
 \label{Eqn:MainOp02}
\end{align}
is Fredholm. Moreover, there exists an open and dense subset $\mathcal{U}$ of $(i) \cap (ii)$ such that for each pair $(\sigma,\Sigma)$ in this set, the operator (\ref{Eqn:MainOp02}) is boundedly invertible. 

Concerning the Inverse Problem \ref{Def.InvProb}, for each pair $(\sigma,\Sigma) \in \mathcal{U}$, the inverse problem is uniquely solvable and the following stability estimate,
\begin{align}
\| c^2 - \tilde{c}^2 \|_{H^{0}(\Omega)} \leq C \,\| \lambda -  \tilde{\lambda} \|_{H^{1}([0,\tau]; H^{0}(\Gamma))},
\end{align}
holds for a positive constant $C$ that remains uniformly bounded for $c,\tilde{c}$ in small bounded sets of $C^{2}(\clo{\Omega})$.
\end{main}

\begin{remark} \label{Rmk:06}
With obvious modifications, Remarks \ref{Rmk:02} and \ref{Rmk:03} made after Theorem \ref{Thm.MainInv} are also valid for Theorem \ref{Thm.MainInv2}.
\end{remark}

We note that in this multi-measurement approach, we are treating the unknowns $f$ and $\nabla f$ as independent from each other. In view of results such as \cite{Ima-Yam-2003,Kli-Yam-2006}, we consider appropriate to attempt to recover $f$ with a single properly chosen initial illumination. In particular, equation (\ref{Eqn.Main03}) can be treated as a first-order partial differential equation with operator-valued coefficients. Formally speaking, the principal part of this equation is given by $(\Sigma_{0} \cdot \nabla)$ and the other terms can be seen as compact perturbations. So one would expect (\ref{Eqn.Main03}) to (generally) have a unique solution if $\Sigma_{0}$ is a vector field satisfying conditions to guarantee a unique global solution of $\Sigma_{0} \cdot \nabla f = h$ for arbitrary $h$. Some of these admissible conditions are reviewed in \cite[Ch. 3]{EvansPDE} and  \cite{Bar-1970}. To be precise, we will make the following assumption.

\begin{assumption} \label{Assump.002}
Recall that $\Gamma$ is the accessible portion of the boundary $\partial \Omega$. Assume that the vector field $\Sigma_{0}$ and the domain $\Omega$ satisfy the following compatibility conditions:
\begin{itemize}
\item[(i)] \textit{Non-characteristic boundary}, that is, $\Sigma_{0}(x) \cdot \nu(x) < 0$ for $x \in \Gamma$ and $\Sigma_{0}(x) \cdot \nu(x) > 0$ for $x \in \partial \Omega \setminus \clo{\Gamma}$  where $\nu$ denotes the outward normal on $\partial \Omega$.
\item[(ii)] \textit{Characteristic flow across the domain}, that is, we assume that the family of characteristic trajectories, given by the solutions of the ODE
\begin{align*}
\dot{x}(s) = \Sigma_{0}(x(s)), \qquad s>0,
\end{align*}
issued from $\Gamma$, covers the domain $\Omega$ and exits through $\partial \Omega \setminus \Gamma$ in finite length and there is $\delta > 0$ such that $|\Sigma_{0}(x)| \geq \delta$ for all $x \in \Omega$.
\end{itemize}
\end{assumption}

These assumptions simply say that the flow of characteristic curves induced by $\Sigma_{0}$ sweeps the domain $\Omega$ in a way that is consistent with the accessible portion of the boundary. 

In preparation to state our third main result, we need to make some more definitions and prove some lemmas. The first item to cover is a Poincar\'{e}--Friedrichs type inequality which can be proven using the method of characteristics \cite[Ch. 3]{EvansPDE} and elementary calculus.
\begin{lemma} \label{Lemma.010}
Let Assumption \ref{Assump.002} hold. If $v \in C^{\infty}(\clo{\Omega})$ such that $v|_{\Gamma} = 0$, then the following estimate
\begin{align*}
\| v \|_{H^{0}(\Omega)} \leq C \| \Sigma_{0} \cdot \nabla v \|_{H^{0}(\Omega)}
\end{align*}
holds for some positive constant $C = C(\Omega,\Gamma,\Sigma_{0})$ independent of $v$.
\end{lemma}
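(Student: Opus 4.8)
The plan is to prove the estimate first for the model transport equation along the characteristics of $\Sigma_0$, and then reduce the general case to it. Under Assumption \ref{Assump.002}, the flow $\dot{x}(s) = \Sigma_0(x(s))$ issued from $\Gamma$ foliates $\Omega$: every point $x\in\Omega$ lies on exactly one characteristic curve $s\mapsto X(s;y)$ with $y\in\Gamma$, and by item (ii) the arc length needed to reach $x$ from $\Gamma$ is bounded uniformly, say by $L=L(\Omega,\Gamma,\Sigma_0)<\infty$ (this uses $|\Sigma_0|\geq\delta>0$ to convert the finite-length condition into a finite-time bound $T\leq L/\delta$ for the arc-length parametrization). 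First I would record that along a characteristic, writing $h := \Sigma_0\cdot\nabla v$ and $\phi(s) := v(X(s;y))$, the chain rule gives $\phi'(s) = (\nabla v\cdot\dot{x})(X(s;y)) = (\Sigma_0\cdot\nabla v)(X(s;y)) = h(X(s;y))$, so since $\phi(0) = v(y) = 0$ by the hypothesis $v|_\Gamma = 0$, we get the pointwise representation
\begin{align*}
v(X(s;y)) = \int_0^s h(X(r;y))\,dr.
\end{align*}

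Next I would turn this into the $H^0(\Omega)$ estimate. By Cauchy--Schwarz applied to the $r$-integral over $[0,s]\subset[0,T]$,
\begin{align*}
|v(X(s;y))|^2 \leq T \int_0^T |h(X(r;y))|^2\,dr.
\end{align*}
Now integrate over $s\in[0,T]$ and over $y\in\Gamma$ against the appropriate surface measure, and then perform the change of variables $(s,y)\mapsto x = X(s;y)$, which is a diffeomorphism from (a subset of) $[0,T]\times\Gamma$ onto $\Omega$ by the foliation property; its Jacobian is bounded above and below by positive constants depending only on $\Omega,\Gamma,\Sigma_0$ (this is where the non-characteristic condition in item (i), $\Sigma_0\cdot\nu\neq 0$ on $\partial\Omega$, guarantees the flow is transversal and the change of variables is nondegenerate, and smoothness of $\Sigma_0$ and $\partial\Omega$ gives the Jacobian bounds). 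The left side becomes (comparable to) $\int_0^T\!\!\int_\Gamma |v(X(s;y))|^2\,dy\,ds \gtrsim \|v\|_{H^0(\Omega)}^2$, while on the right side the $y$- and $r$-integrals are independent of $s$, producing a factor $T$, and the resulting integral $\int_0^T\!\!\int_\Gamma |h(X(r;y))|^2\,dy\,dr \lesssim \|h\|_{H^0(\Omega)}^2$ by the same change of variables in the reverse direction. Collecting the constants yields $\|v\|_{H^0(\Omega)} \leq C\|\Sigma_0\cdot\nabla v\|_{H^0(\Omega)}$ with $C = C(\Omega,\Gamma,\Sigma_0)$, independent of $v$ as required.

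I expect the main obstacle to be the geometry of the change of variables near $\partial\Omega\setminus\clo{\Gamma}$ and near $\clo{\Gamma}$ itself, where characteristics may enter/exit and the arc-length parametrization interacts with the boundary: one must be slightly careful that every $x\in\Omega$ is covered exactly once and that the map $(s,y)\mapsto X(s;y)$, defined on the (variable-length, since exit times depend on $y$) region $\{(s,y): 0\leq s< \tau(y),\ y\in\Gamma\}$, is a bi-Lipschitz (indeed smooth) bijection onto $\Omega$ with uniformly controlled Jacobian. This is exactly what the two parts of Assumption \ref{Assump.002} are designed to ensure — non-characteristicity on all of $\partial\Omega$ makes the flow transversal to the boundary with no grazing, and the "characteristic flow across the domain" condition guarantees coverage and finite exit length — so the argument is essentially the classical method of characteristics of \cite[Ch. 3]{EvansPDE} combined with elementary calculus, as indicated. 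A density remark finishes the statement as phrased: it suffices to prove the inequality for $v\in C^\infty(\clo{\Omega})$ with $v|_\Gamma=0$, which is the hypothesis, so no extra approximation is needed here.
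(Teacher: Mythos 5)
Your proposal is correct and follows exactly the route the paper indicates (the paper gives no written proof, only the remark that the lemma "can be proven using the method of characteristics and elementary calculus"): integrate $\Sigma_0\cdot\nabla v$ along the characteristic flow issued from $\Gamma$, use $v|_\Gamma=0$ and Cauchy--Schwarz, and convert to $H^0(\Omega)$ norms via the change of variables whose Jacobian is controlled by the transversality and finite-exit-length conditions of Assumption \ref{Assump.002}. The caveats you flag (uniformity of the exit length over $y\in\Gamma$ and the degeneracy near the interface between $\Gamma$ and $\partial\Omega\setminus\clo{\Gamma}$) are exactly the points left implicit in the paper's sketch.
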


Let $\Hsp$ denote the completion of the set $\{ v \in C^{\infty}(\clo{\Omega}) \, : \, v|_{\Gamma} = 0 \}$ with respect to the norm associated with the following inner product
\begin{align*}
\la u , v \ra_{\Hsp} = \la \Sigma_{0} \cdot \nabla u \, , \, \Sigma_{0} \cdot \nabla v \ra_{H^{0}(\Omega)}.
\end{align*}
From Lemma \ref{Lemma.010}, we see that this is a well-defined inner product which makes $\Hsp$ a Hilbert space. Now we are ready to state our final result.

\begin{main} \label{Thm.MainInv3}
Let Assumptions \ref{Assump.001} and \ref{Assump.002} hold. If $\sigma \in C^{1}([0,\tau]; C(\clo{\Omega}))$ and $\Sigma \in C^{2}([0,\tau]; C^{1}(\clo{\Omega}))^{n}$, then the operator 
\begin{align}
\Big[ \Sigma_{0} \cdot \nabla + \sigma_{0} \Big] + (\nabla \cdot \Pop \dot{\Sigma} \Sop )^{*}  + ( \Pop \dot{\sigma} \Sop )^{*} : \Hsp \to H^{0}(\Omega) \label{Eqn:MainOp03}
\end{align}
is Fredholm. Moreover, there exists an open and dense subset $\mathcal{U} \subset C^{1}([0,\tau]; C(\clo{\Omega})) \times C^{2}([0,\tau]; C^{1}(\clo{\Omega}))^{n}$ (satisfying Assumption \ref{Assump.002}) such that for each pair $(\sigma,\Sigma)$ in this set, the operator (\ref{Eqn:MainOp03}) is boundedly invertible. 

Concerning the Inverse Problem \ref{Def.InvProb}, provided that $c = \tilde{c}$ on $\Gamma$, then for each pair $(\sigma,\Sigma) \in \mathcal{U}$, the inverse problem is uniquely solvable and the following locally Lipschitz stability estimate,
\begin{align}
\| c^2 - \tilde{c}^2 \|_{\Hsp} \leq C \, \| \lambda -  \tilde{\lambda} \|_{H^{1}([0,\tau]; H^{0}(\Gamma))},
\end{align}
holds for a positive constant $C$ that remains uniformly bounded for $c$ and $\tilde{c}$ in small bounded sets of $C^{2}(\clo{\Omega})$.
\end{main}

We wish make the following remarks.

\begin{remark} \label{Rmk:30}
Assumption \ref{Assump.002} cannot hold in dimension $n=2$ since we have that $\Sigma \equiv 0$ which follows from (\ref{Eqn:10}). Theorem \ref{Thm.MainInv3} is intended for dimension $n \geq 3$.
\end{remark}

\begin{remark} \label{Rmk:31}
Recall the definitions of $\sigma$ and $\Sigma$ found in (\ref{Eqn:10}). The regularity conditions of Theorem \ref{Thm.MainInv3} require that the initial Cauchy data $(\tilde{w},\partial_{t}\tilde{w})|_{t=0}=(\alpha, \tilde{\beta})$ and the Dirichlet boundary data $\gamma$ to be sufficiently smooth.
We note that if $\tilde{w} \in C^2([0,\tau] \times \clo{\Omega})$ (a classical solution of the wave equation) then this smoothness requirement is satisfied. Also notice that one of the regularity conditions implies that $\Sigma_{0}$ is Lipschitz continuous on $\clo{\Omega}$ which guarantees the existence and uniqueness of the characteristics curves discussed in Assumption \ref{Assump.002}. 
\end{remark}

\begin{remark} \label{Rmk:32}
Aside from sufficient regularity, notice that Theorem \ref{Thm.MainInv3} does not require any a-priori knowledge of the initial velocity $\beta$ in the forward problem (\ref{Eqn:Forward}).
\end{remark}

\begin{remark} \label{Rmk:33}
There is a similitude between (the principal part of) the operator (\ref{Eqn:MainOp03}) and a fundamental first-order equation found in \cite[Eqn. 2.6]{Ima-Yam-2003}, and also in the analysis of stationary inverse problems with internal measurements, such as \cite[Eqn. 7]{Bal-Uhl-2010}, \cite[Eqn. 34]{Chen-Yang-2012}, and \cite[Eqn. 59]{Chen-Yang-2013}. This is due to the fact that the unknown function is found within the principal part of the elliptic operator.
\end{remark}

Before going into the proofs, we would like to provide some examples for initial conditions $\alpha$ in the forward problem (\ref{Eqn:Forward}) so that $\sigma_{0} = \Aop_{g} \alpha$ and $\Sigma_{0} = \frac{2-n}{2} \nabla_{g} \alpha$ satisfy the \textit{positivity} and \textit{flow} conditions needed for Theorems \ref{Thm.MainInv} and \ref{Thm.MainInv3}, respectively. 

\begin{example} \label{Example:01}
In Theorem \ref{Thm.MainInv}, it is required that $\sigma_{0} \geq \delta > 0$. Then, we can simply choose $\alpha$ to be the solution of the Poisson equation $\Aop_{g} \alpha = \delta$ with vanishing Dirichlet values on $\partial \Omega$ where $\delta = \text{const.} > 0$. This solution $\alpha$ is guaranteed to exist for any Riemannian metric $g$ since $\Aop_{g}$ is coercive on $H^{1}_{0}(\Omega)$.
\end{example}

\begin{example} \label{Example:02}
In Theorem \ref{Thm.MainInv3}, it is required that $\Sigma_{0}$ satisfies the flow conditions of Assumption \ref{Assump.002}. This is intended for dimension $n \geq 3$. Let $\alpha$ be chosen as the solution of the following Laplace boundary value problem,
\begin{align*}
&  \mathcal{A}_{g} \alpha = 0 \qquad  && \text{in} \quad \Omega,   \\
& \alpha = -1 \qquad  && \text{on} \quad \Gamma, \\
& \alpha = 0 \qquad  && \text{on} \quad (\partial \Omega \setminus \Gamma). 
\end{align*}
Then, by the \textit{maximum principle} or Hopf lemma for elliptic equations (see for instance \cite[Lemma 3.4]{Gil-Tru-1998}), we have that $\partial_{\nu,g} \alpha(x) < 0$ for $x \in \Gamma$ and $\partial_{\nu,g} \alpha(x) > 0$ for $x \in \partial \Omega \setminus \clo{\Gamma}$ as required by condition $(i)$ of Assumption \ref{Assump.002}. Next we consider the contour surfaces,
\begin{align*}
\Gamma_{s} = \{ x \in \Omega \, : \, \alpha(x) = s \} \qquad s \in [-1,0].
\end{align*}
As long as these contour surfaces are smooth and map $\Gamma$ onto $\partial \Omega \setminus \clo{\Gamma}$ homotopically, then the Hopf lemma applies at each contour surface implying that $ \nabla_{g} \alpha$ cannot vanish in $\Omega$. This renders the positivity condition in part $(ii)$ of Assumption \ref{Assump.002}. For sake of simplicity, we have ignored the regularity conditions in this example. Specifically, there is a singularity at the interface between $\Gamma$ and $(\partial \Omega \setminus \Gamma)$ where the Dirichlet boundary condition for $\alpha$ has a jump discontinuity. A mollification argument could be employed to overcome this limitation, but we do not purse that route any further.
\end{example}

\section{Proof of Main Results} \label{Section:Proof}
In this section we proceed to prove the main results of our paper.

\begin{proof}[Proof of Main Theorem \ref{Thm.MainInv}]
Notice that the first term of the governing operator in (\ref{Eqn:MainOp01})  is boundedly invertible on $H^{0}(\Omega)$ provided that $|\sigma_{0}(x)| \geq \delta > 0$ for a.a. $x \in \Omega$. The second term of the operator is a compact operator on $H^{0}(\Omega)$ as asserted by Lemma \ref{Lemma.005} below. Hence, we obtain a Fredholm equation. 

Now we prove the existence of an open and dense subset $(i) \cap (ii)$ on which (\ref{Eqn:MainOp01}) is boundedly invertible. First, standard perturbation shows that the set of $\sigma$'s over which (\ref{Eqn:MainOp01}) is invertible in $H^{0}(\Omega)$ is open. To show denseness, consider replacing $\sigma$ with
\begin{eqnarray*}
\rho(\lambda) = \lambda \sigma + (1-\lambda) \sigma_{0}, \qquad \text{with $\lambda \in \mathbb{C}$}.
\end{eqnarray*}
Notice now that the first term in (\ref{Eqn:MainOp01}) remains unchanged for any choice of $\lambda \in \mathbb{C}$, and the second term remains compact and analytic with respect to $\lambda \in \mathbb{C}$ because $\dot{\rho}(\lambda) = \lambda \dot{\sigma}$ for all $\lambda \in \mathbb{C}$. If we set $\lambda = 0$, then the operator in (\ref{Eqn:MainOp01}) simply becomes $\sigma_{0}$ which is boundedly invertible provided that $(i)$ holds. By the analytic Fredholm theorem \cite{Ren-Rog-2004}, then the system is boundedly invertible for all but a discrete set of $\lambda$'s. In particular, this holds for values arbitrarily close to $\lambda=1$. This shows the desired denseness. 

The local uniformity of the constant $C$ in Theorem \ref{Thm.MainInv} does not follow directly from the arguments so far. So we proceed to prove this claim for a $\sigma \in \mathcal{U}$. Notice that the operator (\ref{Eqn:MainOp01}) and its inverse depend on $\sigma$ and $\Sop$ which in turn depend on $\tilde{c}$ and $c$, respectively.

First, we address perturbations of $\sigma$ within the space $C^{1}([0,\tau];C(\clo{\Omega}))$. From the definition in (\ref{Eqn:10}), we see that $\sigma$ depends on the evolution operator (c$_0$-semigroup) for the wave equation with speed $\tilde{c}$. This is a problem of regular perturbation in the framework of classical solutions. In the appropriate norms, the evolution operator for a well-posed hyperbolic PDE is locally Lipschitz continuous with respect to the coefficients of the PDE. This follows from perturbation theory of semigroups in Banach spaces. See \cite[Ch. 3]{Eng-Nag-2000} and \cite[Ch. 9]{Kato-1995}. In particular, we have local Lipschitz stability of $\sigma \in C^{1}([0,\tau] ; C(\clo{\Omega}))$ with respect to $\tilde{c} \in C^{2}(\clo{\Omega})$. In fact, using the stability in classical H\"{o}lder spaces \cite{Gil-Tru-1998} of elliptic operators (semigroup generators), we obtain that $\tilde{c} \in C^{1,s}(\Omega)$ for $s > 0$ suffices for our purposes. 

Similarly, the operator $\Sop : H^{0}(\Omega) \to H^{0}((0,\tau) \times \Omega)$ is locally Lipschitz stable with respect to $c \in C^{2}(\clo{\Omega})$. This follows from the manner (HUM method \cite{Glow-Lions-He-2008,Bar-Leb-Rau-1992,Lions-1988}) in which the control operator $\Cop : H^{0}(\Omega) \to H^{0}((0,\tau) \times \Gamma)$ is constructed. The operator $\Cop$ is given by the (bounded) inverse of a certain composition of evolution operators (semigroups). In turn, these evolution operators are locally Lipschitz stable with respect to $c \in C^{2}(\clo{\Omega})$. In fact, in this generalized framework, perturbation of $c \in L^{\infty}(\Omega)$ suffices \cite{Blazek-2013}. Hence, by well-known perturbation arguments \cite{Kato-1995}, we obtain both the locally Lipschitz continuity of $\Cop$ and the invariance of the control time $\tau$ under small perturbations of the speed $c \in C^{2}(\clo{\Omega})$. 

Finally, we have the operator (\ref{Eqn:MainOp01}) being locally Lipschitz continuous with respect to perturbations of $c$ and $\tilde{c}$ in $C^{2}(\clo{\Omega})$. Therefore the same is true for its inverse \cite{Kato-1995}. This provides the local uniformity of the stability constant $C$ which concludes the proof.  
\end{proof}

\begin{proof}[Proof of Main Theorem \ref{Thm.MainInv2}]
The proof of this theorem is completely analogous to the proof of Theorem \ref{Thm.MainInv}. Every statement holds if we make the obvious modifications to handle the operator-valued matrices in (\ref{Eqn:MainOp02}).
\end{proof}

The proof of Theorem \ref{Thm.MainInv3} is a little more involved. The first item to cover is the well-posedness of the principal part of the operator (\ref{Eqn:MainOp03}). We establish this as a lemma.

\begin{lemma} \label{Lemma.003}
Let Assumption \ref{Assump.002} hold. Then the operator 
\begin{align*}
(\Sigma_{0} \cdot \nabla + \sigma_{0})  : \Hsp \to H^{0}(\Omega) 
\end{align*}
is boundedly invertible.
\end{lemma}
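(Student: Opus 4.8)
The plan is to show that $L_0 := \Sigma_0 \cdot \nabla + \sigma_0 : \Hsp \to H^0(\Omega)$ is bounded, injective, and surjective, and then invoke the open mapping theorem. Boundedness is immediate: for $v \in \Hsp$ we have $\|\Sigma_0 \cdot \nabla v\|_{H^0(\Omega)} = \|v\|_{\Hsp}$ by definition of the $\Hsp$-norm, and $\|\sigma_0 v\|_{H^0(\Omega)} \le \|\sigma_0\|_{L^\infty} \|v\|_{H^0(\Omega)} \le C \|\sigma_0\|_{L^\infty} \|v\|_{\Hsp}$ by Lemma \ref{Lemma.010}, so $L_0$ is bounded. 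The substance is the a priori estimate $\|v\|_{\Hsp} \lesssim \|L_0 v\|_{H^0(\Omega)}$, from which injectivity and closed range follow, together with a separate surjectivity argument.

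The key idea is the method of characteristics. Under Assumption \ref{Assump.002}, through each point of $\Omega$ passes a unique integral curve $x(s)$ of $\dot x = \Sigma_0(x)$ entering through $\Gamma$ at $s=0$ and exiting through $\partial\Omega\setminus\Gamma$ at some finite length $s = \ell(x) \le L$. First I would establish the estimate for smooth $v$ with $v|_\Gamma = 0$. Along a characteristic, writing $V(s) = v(x(s))$ and $h(s) = (L_0 v)(x(s))$, we get the ODE $V'(s) + \sigma_0(x(s)) V(s) = h(s)$ with $V(0) = 0$; by the integrating-factor formula and the bound $\|\sigma_0\|_{L^\infty} \le M$, one has $|V(s)| \le e^{ML}\int_0^s |h(r)|\,dr$, hence $|V(s)|^2 \le L e^{2ML}\int_0^{\ell}|h(r)|^2\,dr$. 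Integrating over $s$ along the curve and then over the family of curves — here one uses the coarea-type change of variables from "flow-box" coordinates adapted to $\Sigma_0$, whose Jacobian is bounded above and below because $|\Sigma_0| \ge \delta$ and the boundary is noncharacteristic, exactly the ingredients already used to prove Lemma \ref{Lemma.010} — yields
\begin{align*}
\|v\|_{H^0(\Omega)}^2 \lesssim \|L_0 v\|_{H^0(\Omega)}^2,
\end{align*}
and then $\|v\|_{\Hsp} = \|\Sigma_0\cdot\nabla v\|_{H^0(\Omega)} \le \|L_0 v\|_{H^0(\Omega)} + \|\sigma_0 v\|_{H^0(\Omega)} \lesssim \|L_0 v\|_{H^0(\Omega)}$. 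By density of smooth functions vanishing on $\Gamma$ in $\Hsp$, and since $L_0$ is bounded, this estimate extends to all $v \in \Hsp$, giving injectivity and closed range.

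For surjectivity I would again use characteristics: given $h \in H^0(\Omega)$, define $v$ by solving the transport ODE $V' + \sigma_0 V = h$, $V(0)=0$, along each characteristic. The estimate above (applied in reverse) shows $v \in H^0(\Omega)$ with $\|v\|_{H^0} \lesssim \|h\|_{H^0}$, and then $\Sigma_0\cdot\nabla v = h - \sigma_0 v \in H^0(\Omega)$ shows $v \in \Hsp$ with $L_0 v = h$; one should check the distributional identity $\Sigma_0\cdot\nabla v = h - \sigma_0 v$ by testing against smooth compactly supported functions and approximating $h$ by smooth data. I expect the main obstacle to be the regularity/measurability bookkeeping in the change of variables to flow-box coordinates: one must verify that the characteristic foliation depends measurably (indeed Lipschitz, since $\Sigma_0$ is Lipschitz by Remark \ref{Rmk:31}) on the parametrizing point of $\Gamma$, that the exit time $\ell(x)$ is bounded and measurable, and that the associated Jacobian is bounded away from $0$ and $\infty$ uniformly — precisely the technical content already invoked for Lemma \ref{Lemma.010}, so I would reuse that machinery rather than redo it. Once the bilateral a priori estimate is in hand, bounded invertibility follows from the open mapping theorem.
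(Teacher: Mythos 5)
Your proposal is correct and follows essentially the same route as the paper: the method of characteristics yields surjectivity, and an integrating-factor/Gronwall bound along characteristics combined with the Poincar\'e--Friedrichs estimate of Lemma \ref{Lemma.010} gives the lower bound $\| v \|_{\Hsp} \lesssim \| (\Sigma_{0}\cdot\nabla + \sigma_{0}) v \|_{H^{0}(\Omega)}$ (indeed, your computation reproduces the paper's explicit constant $1 + C\lambda e^{C\lambda}$). Your write-up simply fills in more of the flow-box change-of-variables and density details that the paper leaves implicit.
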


\begin{proof}
The operator is clearly bounded. Since Assumption \ref{Assump.002} holds, by the method of characteristics we obtain that the operator is surjective. Again using the form of the solutions from the method of characteristics, we find that for all $v \in \Hsp$
\begin{align*}
\| v \|_{\Hsp} = \| \Sigma_{0} \cdot \nabla v \|_{H^{0}(\Omega)} \leq \Big( 1 + C \lambda e^{C \lambda} \Big) \| \Sigma_{0} \cdot \nabla v + \sigma_{0} v \|_{H^{0}(\Omega)}
\end{align*}
where $\lambda = \| \sigma_{0} \|_{L^{\infty}(\Omega)}$ and $C = C(\Omega,\Gamma,\Sigma_{0})$ is the constant appearing in Lemma \ref{Lemma.010}. This implies the operator is injective and bounded from below. Therefore its inverse is also bounded. 
\end{proof}

With this lemma we can proceed to the proof of the third main theorem of this paper.
\begin{proof}[Proof of Main Theorem \ref{Thm.MainInv3}]
Notice that the operator in (\ref{Eqn:MainOp03}) can be expressed as
\begin{align*}
\Big[ \Sigma_{0} \cdot \nabla + \sigma_{0} \Big] + (\nabla \cdot \Pop \dot{\Sigma} \Sop )^{*}   + ( \Pop \dot{\sigma} \Sop )^{*} : \Hsp \to H^{0}(\Omega),
\end{align*}
where the operator in square brackets is boundedly invertible as shown in Lemma \ref{Lemma.003}. The operators $( \Pop \dot{\sigma} \Sop )^{*} : H^{0}(\Omega) \to H^{0}(\Omega)$ and $(\nabla \cdot \Pop \dot{\Sigma} \Sop )^{*} : H^{0}(\Omega) \to H^{0}(\Omega)$ are compact as shown in Lemma \ref{Lemma.005}, therefore they are also compact from $\Hsp$ to $H^{0}(\Omega)$.
Applying the inverse of $(\Sigma_{0} \cdot \nabla + \sigma_{0})$ we obtain that 
\begin{align*}
I + \Big[ \Sigma_{0} \cdot \nabla + \sigma_{0} \Big]^{-1} \Big[ (\nabla \cdot \Pop \dot{\Sigma} \Sop )^{*}  + ( \Pop \dot{\sigma} \Sop )^{*} \Big] : \Hsp \to \Hsp,
\end{align*}
is Fredholm. The same argument employed in the proof of Theorem \ref{Thm.MainInv}, based on the analytic Fredholm theory, shows that the set of $\sigma \in C^{1}([0,\tau]; C(\clo{\Omega}))$ and $\Sigma \in C^{2}([0,\tau] ; C^{1}(\clo{\Omega}))^{n}$ for which the above operator is boundedly invertible is both open and dense. The statement concerning the local uniformity of the stability constant $C$ follows from the same argument used in the proof of Theorem \ref{Thm.MainInv}. This concludes the proof.
\end{proof}

Now we proceed to prove a lemma already employed in the proofs of the Main Theorems \ref{Thm.MainInv}-\ref{Thm.MainInv3}.

\begin{lemma} \label{Lemma.005}
If $\sigma \in C^{1}([0,\tau] ; C(\clo{\Omega}))$ then $(\Pop \dot{\sigma} \Sop) : H^{0}(\Omega) \to H^{0}(\Omega)$ is compact. Similarly, if $\Sigma \in C^{2}([0,\tau]; C^{1}(\clo{\Omega}))^{n}$ then $(\Pop \dot{\Sigma} \Sop) : H^{0}(\Omega) \to H^{1}(\Omega)^{n}$ is compact.
\end{lemma}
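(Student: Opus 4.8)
The plan is to write each operator as a norm-limit of operators that factor through a compact embedding, the compactness coming from the time-antiderivative $\Top$. Write $\phi\in H^{0}(\Omega)$, $\psi=\Sop\phi$, $\Theta=\Top\psi=\Top\Sop\phi$, and $\Xi=\Top\Theta$. By Definition \ref{Def:CSOpe} and Theorem \ref{Thm.Control}, $\Theta$ is the solution $\xi$ of (\ref{Eqn:c}) driven by $\zeta_{\rm min}$, so Lemma \ref{Lemma.007} gives $\|\psi\|_{C([0,\tau];H^{0}(\Omega))}\lesssim\|\phi\|_{H^{0}(\Omega)}$ and $\|\Theta\|_{C([0,\tau];H^{1}(\Omega))}+\|\Theta\|_{C^{1}([0,\tau];H^{0}(\Omega))}\lesssim\|\phi\|_{H^{0}(\Omega)}$, with $\Theta(0)=\Theta(\tau)=0$ (the first because $\Theta=\Top\psi$, the second because $\xi$ has vanishing Cauchy data at $t=\tau$). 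The first observation is that $\Top\Sop:H^{0}(\Omega)\to C([0,\tau];H^{0}(\Omega))$ is compact: the image of the unit ball is bounded in $C([0,\tau];H^{1}(\Omega))$, hence relatively compact in $H^{0}(\Omega)$ for each $t$ by Rellich--Kondrachov, and equicontinuous in $H^{0}(\Omega)$ by the $C^{1}([0,\tau];H^{0}(\Omega))$-bound; vector-valued Arzel\`{a}--Ascoli then applies.

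For the first statement, I would reduce to $\sigma$ smooth in time by mollification in $t$: if $\sigma_{j}\to\sigma$ in $C^{1}([0,\tau];C(\clo{\Omega}))$ with $\sigma_{j}$ smooth in $t$, then $\|(\Pop\dot\sigma_{j}\Sop-\Pop\dot\sigma\Sop)\phi\|_{H^{0}(\Omega)}\lesssim\|\dot\sigma_{j}-\dot\sigma\|_{C([0,\tau];C(\clo{\Omega}))}\|\phi\|_{H^{0}(\Omega)}$, so $\Pop\dot\sigma_{j}\Sop\to\Pop\dot\sigma\Sop$ in operator norm and it suffices to treat $\sigma_{j}$ (compactness passes to norm limits). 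For $\sigma_{j}$, an integration by parts in $t$ using $\psi=\partial_{t}\Theta$ and $\Theta(0)=\Theta(\tau)=0$ gives $\Pop\dot\sigma_{j}\Sop\phi=-\int_{0}^{\tau}(\partial_{t}^{2}\sigma_{j})(t,\cdot)\Theta(t,\cdot)\,dt$, the composition of the bounded map $C([0,\tau];H^{0}(\Omega))\ni F\mapsto-\int_{0}^{\tau}(\partial_{t}^{2}\sigma_{j})(t,\cdot)F(t,\cdot)\,dt\in H^{0}(\Omega)$ with the compact $\Top\Sop$; hence $\Pop\dot\sigma\Sop$ is compact on $H^{0}(\Omega)$.

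For the second statement the scheme is the same, but two integrations by parts are needed and the extra regularity is supplied by the wave equation. Mollifying $\Sigma$ in $t$ gives $\Sigma_{j}\to\Sigma$ in $C^{2}([0,\tau];C^{1}(\clo{\Omega}))^{n}$ with $\Sigma_{j}$ smooth in $t$; since $\Sigma\in C^{2}([0,\tau];C^{1}(\clo{\Omega}))^{n}$, one already has $\Pop\dot\Sigma\Sop\phi=-\int_{0}^{\tau}(\partial_{t}^{2}\Sigma)(t,\cdot)\Theta(t,\cdot)\,dt\in H^{1}(\Omega)^{n}$ (multiplication by a $C^{1}(\clo{\Omega})$-function preserves $H^{1}(\Omega)$), and the analogous crude estimate gives $\Pop\dot\Sigma_{j}\Sop\to\Pop\dot\Sigma\Sop$ in $\mathcal{L}(H^{0}(\Omega),H^{1}(\Omega)^{n})$, so it suffices to treat $\Sigma_{j}$. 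For $\Sigma_{j}$ smooth in $t$, integrating by parts once more with $\Theta=\partial_{t}\Xi$ and $\Xi(0)=0$ yields
\begin{align*}
\Pop\dot\Sigma_{j}\Sop\phi=-\,(\partial_{t}^{2}\Sigma_{j})(\tau,\cdot)\,\Xi(\tau,\cdot)+\int_{0}^{\tau}(\partial_{t}^{3}\Sigma_{j})(t,\cdot)\,\Xi(t,\cdot)\,dt .
\end{align*}
The crucial point is that $\phi\mapsto\Xi$ is compact from $H^{0}(\Omega)$ into $C([0,\tau];H^{1}(\Omega))$: integrating $\partial_{t}\psi=\mathcal{A}_{c^{-2}g}\Theta$ (from (\ref{Eqn:001c})) in time gives $\mathcal{A}_{c^{-2}g}\Xi(t)=\psi(t)-\phi\in H^{0}(\Omega)$ with boundary values $\Top\zeta_{\rm min}(t)$ on $\Gamma$ and $0$ on $\partial\Omega\setminus\Gamma$; since $\zeta_{\rm min}\in H^{1}_{0}((0,\tau)\times\Gamma)$, this Dirichlet datum lies in $H^{1}(\partial\Omega)$ with norm $\lesssim\|\phi\|_{H^{0}(\Omega)}$, so elliptic regularity gives $\Xi(t)\in H^{3/2}(\Omega)$ uniformly in $t$, hence relative compactness in $H^{1}(\Omega)$ for each $t$; equicontinuity of $\Xi$ in $H^{1}(\Omega)$ follows from $\partial_{t}\Xi=\Theta$ being bounded in $C([0,\tau];H^{1}(\Omega))$, and Arzel\`{a}--Ascoli applies. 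Multiplication by the fixed $C^{1}(\clo{\Omega})$-functions $(\partial_{t}^{2}\Sigma_{j})(\tau,\cdot)$ and $(\partial_{t}^{3}\Sigma_{j})(t,\cdot)$ being bounded on $H^{1}(\Omega)$, both terms factor through this compact map, so $\Pop\dot\Sigma\Sop:H^{0}(\Omega)\to H^{1}(\Omega)^{n}$ is compact.

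I expect the second statement to be the main obstacle: obtaining genuine $H^{1}(\Omega)$-compactness, rather than the $H^{1-\epsilon}(\Omega)$-compactness that a single integration by parts would give, forces the two-fold integration by parts and hence the hypothesis $\Sigma\in C^{2}([0,\tau];C^{1}(\clo{\Omega}))^{n}$ together with the identity $\mathcal{A}_{c^{-2}g}\Xi=\psi-\phi$, which trades the second time-antiderivative for two spatial derivatives. The technically delicate part there is the boundary bookkeeping: checking that $\zeta_{\rm min}\in H^{1}_{0}((0,\tau)\times\Gamma)$ indeed produces a Dirichlet datum for $\Xi(t)$ lying in $H^{1}(\partial\Omega)$ (extension by zero across $\partial\Gamma$), which is what powers the elliptic-regularity gain.
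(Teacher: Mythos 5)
Your proof is correct, but its engine is genuinely different from the paper's. The paper fixes a time-smooth $\sigma$, forms $\varrho=\dot\sigma\psi$, computes the wave equation that $\varrho$ satisfies (whose source $F$ involves two spatial derivatives of $\dot\sigma$), integrates in time to obtain a stationary elliptic problem for $\Pop\varrho$, splits it into an interior-source piece and a boundary-source piece, and concludes via elliptic regularity that $\Pop\dot\sigma\Sop$ maps $H^{0}(\Omega)$ boundedly into $H^{3/2}(\Omega)$, with compactness then supplied by Rellich. You instead isolate the compactness in the coefficient-free maps $\Top\Sop$ and $\Top\Top\Sop$: an Arzel\`a--Ascoli (Aubin--Lions type) argument shows $\Top\Sop$ is compact into $C([0,\tau];H^{0}(\Omega))$, and for the $H^{1}$-valued statement a pointwise-in-$t$ elliptic regularity argument for $\Xi=\Top\Top\Sop\phi$, based on the identity $\mathcal{A}_{c^{-2}g}\Xi(t)=\psi(t)-\phi$, lands compactly in $C([0,\tau];H^{1}(\Omega))$; the coefficient then enters only through bounded multiplications after integration by parts in time --- the same device the paper itself uses when it rewrites $\Pop\dot\Sigma\Sop$ as $\Pop\ddot\Sigma\Top\Sop$. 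Both routes consume the same external inputs (the control estimates of Lemma \ref{Lemma.007} and the $H^{3/2}$ elliptic estimate with $H^{1}$ Dirichlet data on Lipschitz domains, applied in your case to $\Xi(t)$ rather than to $\Pop\varrho$) and both require the closing density-plus-closedness-of-compacts step, since $\sigma$ and $\Sigma$ carry only one, respectively two, time derivatives. What your version buys is that $\sigma$ and $\Sigma$ are never differentiated in space beyond what the multiplication at the end demands ($C(\clo{\Omega})$, resp.\ $C^{1}(\clo{\Omega})$), because the elliptic gain is extracted from the control system alone rather than from the product $\dot\sigma\psi$; what the paper's version buys is the marginally stronger intermediate conclusion that $\Pop\dot\sigma\Sop$ is bounded into $H^{3/2}(\Omega)$ rather than merely compact into $H^{1}(\Omega)$. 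You are also right that the delicate points are exactly the ones you flag: the vanishing of $\Theta$ at $t=0$ and $t=\tau$ (which kills the boundary terms in the first integration by parts) and the zero-extension of $\Top\zeta_{\rm min}(t,\cdot)$ from $\Gamma$ to $\partial\Omega$ as an $H^{1}(\partial\Omega)$ datum, which is the same bookkeeping the paper performs for $\Pop G$ in its estimate of $\varphi_{2}$.
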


\begin{proof}
First, we consider $\sigma \in C^{\infty}([0,\tau] \times \clo{\Omega})$, keeping in mind that this condition will be relaxed later. Let $\phi \in H^{0}(\Omega)$ and $\eta = \Cop \phi$ and $\psi = \Sop \phi$. 

Now, let $\varrho = \dot{\sigma} \psi$ which satisfies the following problem,
\begin{align*}
& \partial_{t}^{2} \varrho - \mathcal{A}_{c^{-2}g} \varrho = F \quad && \text{in $(0,\tau) \times \Omega$},  \\
& \varrho = 0 \quad \text{and} \quad \partial_{t} \varrho = 0 \quad && \text{on $\{ t = \tau \} \times \Omega$},  \\
& \varrho = G \quad && \text{on $(0,\tau) \times \Gamma$}, \\ 
& \varrho = 0 \quad && \text{on $(0,\tau) \times (\partial \Omega \setminus \Gamma)$},
\end{align*}
where $F \in C([0,\tau];H^{-1}(\Omega))$ is given by
\begin{align*}
F & = \partial_{t}^{2}(\dot{\sigma} \psi) - \mathcal{A}_{c^{-2}g} (\dot{\sigma} \psi) =  ( \partial^{2}_{t}\dot{\sigma} - \mathcal{A}_{c^{-2}g} \dot{\sigma} ) \psi + 2 ( \ddot{\sigma}  \dot{\psi} - \nabla \dot{\sigma} \cdot \nabla_{c^{-2} g} \psi) 
\end{align*}
and $ G \in H^{0}((0,\tau) \times \Gamma)$ is given by
\begin{align*}
G = \dot{\sigma} \eta.
\end{align*}

Hence, the time-integral $\varphi = \Pop \varrho$ satisfies a stationary problem of the form
\begin{align*}
- \mathcal{A}_{c^{-2}g} \varphi & = \ddot{\sigma}|_{t=0} \phi + \Pop F \quad && \text{in $\Omega$},  \\
\varphi & = \Pop G \quad && \text{on $(0,\tau) \times \Gamma$}, \\ 
\varphi & = 0 \quad && \text{on $(0,\tau) \times (\partial \Omega \setminus \Gamma)$}.
\end{align*}
Using integration by parts in time, we obtain
\begin{align*}
\Pop F & = \Pop \big[ ( \partial^{2}_{t}\dot{\sigma} - \mathcal{A}_{c^{-2}g} \dot{\sigma} ) \psi \big]  + 2 \Pop ( \ddot{\sigma}  \dot{\psi} - \nabla \dot{\sigma} \cdot \nabla_{c^{-2} g} \psi) \\
& = \Pop \big[ ( \partial^{2}_{t}\dot{\sigma} - \mathcal{A}_{c^{-2}g} \dot{\sigma} ) \psi \big] - 2 \ddot{\sigma}|_{t=0} \phi -  2 \Pop ( \dddot{\sigma} \psi   ) - 2 \Pop ( \nabla \ddot{\sigma} \cdot \nabla_{c^{-2} g} \Top \psi )
\end{align*}
and 
\begin{align*}
\Pop G = \Pop ( \dot{\sigma} \eta ) = - \Pop( \ddot{\sigma} \Top \eta ).
\end{align*}

In the above steady problem there is an interior forcing term $\ddot{\sigma}|_{t=0} \phi + \Pop F$ and a boundary forcing term $\Pop G$. By virtue of linearity, we can decompose the solution as $\varphi = \varphi_{1} + \varphi_{2}$ where the components solve the following stationary problems,
\begin{subequations}
\label{Eqn:System01}
\begin{align}
- \mathcal{A}_{c^{-2}g} \varphi_{1} & = \ddot{\sigma}|_{t=0} \phi + \Pop F \quad && \text{in $\Omega$},  \\
\varphi_{1} & = 0 \quad && \text{on $(0,\tau) \times \Gamma$}, \\ 
\varphi_{1} & = 0 \quad && \text{on $(0,\tau) \times (\partial \Omega \setminus \Gamma)$},
\end{align}
\end{subequations}
and
\begin{subequations}
\label{Eqn:System02}
\begin{align}
- \mathcal{A}_{c^{-2}g} \varphi_{2} & = 0 \qquad \qquad \qquad && \text{in $\Omega$},  \\
\varphi_{2} & = \Pop G \qquad && \text{on $(0,\tau) \times \Gamma$}, \\ 
\varphi_{2} & = 0 \qquad && \text{on $(0,\tau) \times (\partial \Omega \setminus \Gamma)$},
\end{align}
\end{subequations}

Now, from the stability estimates (\ref{Eqn:Stab02})-(\ref{Eqn:Stab03}) of Lemma \ref{Lemma.007} and Theorem \ref{Thm.Control}, we find that these forcing terms satisfy 
\begin{align*}
 \| \Pop F \|_{H^{0}(\Omega)} &\lesssim \| \psi \|_{C([0,\tau]; H^{0}(\Omega))}  + \| \nabla_{c^{-2} g} \Top \psi \|_{C([0,\tau]; H^{0}(\Omega))} \lesssim \| \phi \|_{H^{0}(\Omega)},
\end{align*}
and
\begin{align*} 
 \| \Pop G \|_{H^{1}_{0}(\Gamma)} \lesssim  \| \Top \eta \|_{H^{1}_{0}((0,\tau) \times \Gamma)} \lesssim \| \phi \|_{H^{0}(\Omega)}.
\end{align*}

On the other hand, since $\mathcal{A}_{c^{-2}g}$ is coercive, then the above stationary problems (\ref{Eqn:System01})-(\ref{Eqn:System02}) are well-posed boundary value problem in a family of Sobolev scales. In particular, for the solution of problem (\ref{Eqn:System01}) we have that 
\begin{align}
\| \varphi_{1} \|_{H^{2}(\Omega)} \lesssim  \| \phi \|_{H^{0}(\Omega)} + \| \Pop F \|_{H^{0}(\Omega)} \lesssim \| \phi \|_{H^{0}(\Omega)}, \label{Eqn:MainEstim01}
\end{align}
and the solution of problem (\ref{Eqn:System02}) satisfies
\begin{align}
\| \varphi_{2} \|_{H^{3/2}(\Omega)} \lesssim  \| \Pop G \|_{H^{1}_{0}(\Gamma)}  \lesssim \| \phi \|_{H^{0}(\Omega)}.
\label{Eqn:MainEstim02}
\end{align}
Estimates (\ref{Eqn:MainEstim01})-(\ref{Eqn:MainEstim02}) are more easily found in the literature for sufficiently smooth domains \cite{Lio-Mag-Book-1972,McLean2000}, but they happen to be valid in Lipschitz domains as well \cite{Jer-Ken-1995}.

As a result of the estimates (\ref{Eqn:MainEstim01})-(\ref{Eqn:MainEstim02}) and the fact that $\varphi = \varphi_{1} + \varphi_{2}$, we obtain that the mapping $\phi \mapsto \Pop \dot{\sigma} \Sop \phi$ is a bounded operator from $H^{0}(\Omega)$ to $H^{3/2}(\Omega)$. Our claim follows due to the compact embedding of $H^{3/2}(\Omega)$ into $H^{1}(\Omega)$ or into $H^{0}(\Omega)$. 

The above proof is valid for a sufficiently smooth $\sigma$. However, it is straightforward to show that for $\phi \in H^{0}(\Omega)$,
\begin{eqnarray*}
\| \Pop (\dot{\sigma}_{1} - \dot{\sigma}_{2}) \Sop \phi \|_{H^{0}(\Omega)} \lesssim \| \dot{\sigma}_{1} - \dot{\sigma}_{2} \|_{C([0,\tau] \times \clo{\Omega})}  \| \phi \|_{H^{0}(\Omega)}
\end{eqnarray*}
Since the subspace of compact operators is closed, then we conclude that $\Pop \dot{\sigma} \Sop : H^{0}(\Omega) \to H^{0}(\Omega)$ is compact for $\sigma \in C^{1}([0,\tau]; C(\clo{\Omega}))$ which concludes the proof of the first part. 

For the second part, following the exact same proof, we would have that $(\Pop \dot{\Sigma} \Sop) : H^{0}(\Omega) \to H^{0}(\Omega)^{n}$ is compact for $\Sigma \in C^{1}([0,\tau]; C(\clo{\Omega}))^{n}$ which is used to prove Theorem \ref{Thm.MainInv2}. However, for Theorem \ref{Thm.MainInv3} we need $(\Pop \dot{\Sigma} \Sop)$ mapping compactly into $H^{1}(\Omega)^{n}$ so that $(\nabla \cdot \Pop \dot{\Sigma} \Sop) : H^{0}(\Omega) \to H^{0}(\Omega)$ is compact. Hence, we need to include one more derivative in the process.

Following the above proof, it follows that $(\Pop \dot{\Sigma} \Sop) : H^{0}(\Omega) \to H^{1}(\Omega)^{n}$ is compact for sufficiently smooth $\Sigma$ since it maps boundedly into $H^{3/2}(\Omega)^{n}$. But we can show that for $\phi \in H^{0}(\Omega)$,
\begin{eqnarray*}
\| \partial_{x} \Pop \dot{\Sigma} \Sop \phi \|_{H^{0}(\Omega)} &= &  \| \partial_{x} \Pop \ddot{\Sigma} \Top \Sop \phi \|_{H^{0}(\Omega)} \\
&\lesssim & \| \partial_{x} \ddot{\Sigma} \|_{C([0,\tau] \times \clo{\Omega})} \| \Top \Sop \phi \|_{H^{0}((0,\tau) \times \Omega)} \\
&& \qquad + \| \ddot{\Sigma} \|_{C([0,\tau] \times \clo{\Omega})} \| \partial_{x} \Top \Sop \phi \|_{H^{0}((0,\tau) \times \Omega)}.
\end{eqnarray*}
Hence, from Lemma \ref{Lemma.007} and Theorem \ref{Thm.Control}, we obtain that
\begin{eqnarray*}
\| \partial_{x} \Pop \dot{\Sigma} \Sop \phi \|_{H^{0}(\Omega)}
&\lesssim & \Big( \| \partial_{x} \ddot{\Sigma} \|_{C([0,\tau] \times \clo{\Omega})} + \| \ddot{\Sigma} \|_{C([0,\tau] \times \clo{\Omega})} \Big) \| \phi \|_{H^{0}(\Omega)}.
\end{eqnarray*}
Therefore, by linearity we obtain
\begin{eqnarray*}
\| \Pop (\dot{\Sigma}_{1} - \dot{\Sigma}_{2}) \Sop \phi \|_{H^{1}(\Omega)} \lesssim \Big( \| \ddot{\Sigma}_{1} - \ddot{\Sigma}_{2} \|_{C([0,\tau]; C^{1}(\clo{\Omega}))} \Big) \| \phi \|_{H^{0}(\Omega)}.
\end{eqnarray*}
Again, because the subspace of compact operators is closed, we conclude that $\Pop \dot{\Sigma} \Sop : H^{0}(\Omega) \to H^{1}(\Omega)^{n}$ is compact for $\Sigma \in C^{2}([0,\tau]; C^{1}(\clo{\Omega}))^{n}$ which concludes the proof of the second part.
\end{proof}



\section*{Acknowledgments}
This work was partially supported by the AFOSR Grant
FA9550-12-1-0117 and the ONR Grant N00014-12-1-0256.
The author would like to thank the anonymous referees for carefully reviewing the manuscript and for providing helpful recommendations.



\bibliographystyle{aims}
\bibliography{Biblio}

\medskip
Received xxxx 20xx; revised xxxx 20xx.
\medskip

\end{document}